\newtheorem{prop}{Proposition}[section]
\newtheorem{thm}[prop]{Theorem}
\newtheorem{lem}[prop]{Lemma}
\newtheorem{cor}[prop]{Corollary}
\newtheorem{rem}[prop]{Remark}
\newcommand{\cB}{{\cal B}}
\newcommand{\iI}{{\cal I}}
\newcommand{\BB}{{\mathbb B}}
\newcommand{\DD}{{\mathbb D}}
\newcommand{\TT}{{\mathbb T}}
\DeclareMathOperator{\dist}{dist}
\renewcommand{\ker}{\mathrm{Ker\,}}
\newcommand{\Clos}{\mathrm{clos\,}}
\newcommand{\hH}{\mathrm{H^2 }}
\newcommand{\kK}{\mathrm{K_\theta }}
\newcommand{\lL}{\mathrm{L^2 }}
\newcommand{\kH}{\mathrm{H^\infty   }}
\newcommand{\kL}{\mathrm{L^\infty }}
\renewcommand{\dist}{\mathrm{dist }}
\newcommand{\supp}{\mathrm{supp\, }}
\newcommand{\aA}{{\cal{A}(\DD)\,}}
\title[Compact operators]{Compact operators that commute with a contraction}
\author[kellay]{K. Kellay}
\address{CMI\\LATP\\Universit\'e de Provence\\
39, rue F. Joliot-Curie\\13453 Marseille cedex 13\\France}
\email{kellay@cmi.univ-mrs.fr}
\author[zarrabi]{M. Zarrabi}
\address{ Universit\'e de Bordeaux, UMR 5251 \\
351, cours de la Lib\'eration\\
F-33405 Talence cedex\\ France}
\email{Mohamed.Zarrabi@math.u-bordeaux1.fr}
\thanks{The research of the first author was supported in part by ANR Dynop.}
\keywords{compact operators, essentially unitary, commutant}
\subjclass[2000]{primary 47B05; secondary  30H05.}
\begin{document}
\maketitle
\begin{abstract} Let $T$ be a $C_0$--contraction on a separable Hilbert space. We assume that  $I_H-T^*T$ is compact. For a function $f$ holomorphic in 
the unit disk $\DD$ and continuous on $\overline\DD$, we show that $f(T)$ is compact if and only if $f$ vanishes on $\sigma (T)\cap \TT$, 
where $\sigma (T)$ is the spectrum of $T$ and $\TT$  the unit circle. If $f$ is just a bounded holomorphic function on $\DD$ we prove 
that $f(T)$ is compact if and only if $\lim_{n\to \infty} T^nf(T) =0$. 

\end{abstract}

\section{Introduction}

Let  $H$  be a separable Hilbert space, and  $\mathcal{L}(H)$  the space of all bounded operators on  $H$.  For  $T\in\mathcal{L}(H)$, we denote by  $\sigma(T)$ the spectrum of $T$. The Hardy space $\kH $ is the 
set of all bounded and holomorphic functions on $\DD$. The spectrum of an inner function $\theta$  is defined by 
$$
\sigma(\theta)=\Clos \theta^{-1}(0)\cup \supp \mu,
$$ 
where $\mu$ is the singular measure associated to the singular part of $\theta$ and $\supp \mu$ is 
the closed support of $\mu$ (see \cite{Nik}, p. 63). A contraction  $T$ on $H$ is called  a 
$C_0$--contraction (or in class $C_0$) if it is completely nonunitary and there exists a nonzero function $\theta\in \kH$ 
 such that  $\theta(T)=0$, thus there exists a minimal inner function $m_T$ that annihilates $T$, i.e $m_T(T)=0$, and we have  $\sigma(T)=\sigma(m_T)$ 
 (see  \cite{NF} p. 117 and  \cite{Nik}, p. 71-72).  A contraction $T$ is said essentially unitary if  $I_H-T^*T$ is compact, where $I_H$ 
 is the identity map on $H$.  

Let $T$ be a $C_0$--contraction on  $H$, and let $\kH (T)=\{f(T)\text{ : }f\in \kH \}.$  $\kH (T)$ is clearly a subspace of the commutant $\{T\}'=\{A\in \mathcal{L}(H)\text{ : } AT=TA\}$.  In this note we study the question of when  $\kH (T)$ contains a  nonzero compact operator.  B. Sz--Nagy \cite{Na}, proved that $\{T\}'$ contains always a nonzero compact operator, but there exists a $C_0$--contraction $T$ such that zero is the unique compact operator contained in   $\kH (T)$ (see section 4). Nordgreen \cite{Nor} proved that if $T$ is an essentially unitary $C_0$--contraction  then $\kH (T)$ contains a nonzero compact operator. There are also results about the existence of smooth operators (finite rank, Schatten--von Neuman operators) in $\kH (T)$ (see \cite{V}).
It is also shown in the Atzmon's paper \cite{A}, that if $T$ is a cyclic completely nonunitary contraction such that $\sigma (T)=\{1\}$ and
\begin{equation}
\log \Vert T^{-n}\Vert=O(\sqrt{n}),\ n\to\infty,
\end{equation}
then $T-I_H$ is compact. Our hope in this paper is to establish results of this kind for more general contractions. Section 2 is devoted to the study of the compactness of $f(T)$ when $f$ is in the disk algebra. We recall that the disk algebra $\aA$ is the space of all functions that are holomorphic in $\DD$ and continuous on $\TT$. We show (Theorem \ref{thm1}), that, if $f\in \aA$ and if $T$ is a $C_0$--contraction which is essentially unitary, then $f(T)$ is compact if and only if $f$ vanishes on $\sigma(T)\cap\TT$. The main tool used in the proof of this result is the Beurling-Rudin theorem about the characterization of the closed ideals of $\aA$. For a large class of $C_0$--contractions we show (Proposition \ref{proposition1}) that the condition `` $T$ is essentially unitary'' is necessary in the above result. As corollary, we obtain that if $T$ is a contraction that is annihilated by a nonzero function in $\aA$ and if $T$ is cyclic (or, more generally, of finite multiplicity) then $f(T)$ is compact whenever $f\in \aA$ and $f$ vanishes on $\sigma(T)\cap\TT$. We notice  that an invertible  contraction with spectrum reduced to a single point and satisfying condition (1) is necessarily annihilated by a nonzero function in $\aA$ (see \cite{At}).

In section 3, we are interested in the compactness of $f(T)$ when $f\in \kH$. Let $\hH$ be the usual  Hardy space on $\DD$, $\theta$ being an inner function and  
$\kK= \hH\ominus \theta\hH$. We consider the model operator $T_\theta: \kK\to  \kK$ given by  $T_\theta g=P_\theta(z g)$, where $P_\theta$ stands the 
orthogonal projection on $\kK$.  Notice that  $m_{T_\theta}=\theta$ and the commutant 
$\{T_\theta\}'$ of  $T_\theta$ coincide with  $\kH (T_\theta)$.
Hartman \cite{Har} and Sarason \cite{S} gave a complete characterization of compact operators commuting with $T_\theta$ (see \cite{Nik} p. 182). They showed that 
$$
T^{n}_{\theta} f(T_\theta)\to 0 \Longleftrightarrow f(T_\theta) {\text { is compact } } \Longleftrightarrow f\overline{\theta}\in  \kH + C(\TT).
$$
For $C_{00}$ contractions, that is contractions $T$ such that $T^nx\longrightarrow 0$ and ${T^*}^nx\longrightarrow 0$ for every $x\in H$,  Muhly gave in \cite{Mu} (see also \cite{P}) a characterization of functions $f\in \kH $ such that $f(T)$ is compact, in term of the characteristic function of $T$.
With the help of the corona theorem,  we show (Theorem \ref{muhly}) that if $T$ is an essentially unitary $C_0$-contraction, then $f(T)$ ($f\in\kH$) is compact if and only if $T^{n} f(T)\to 0$. We obtain in particular that if $\lim_{r\to 1-} f(rz)=0$ for every $z\in \sigma (T)\cap\TT$, then $f(T)$ is compact.

\section{ Compactness of $f(T)$ with $f$ in the disk algebra}

Let $T$ be a $C_0$--contraction on  $H$. We will introduce some definitions and results we will need later.
We call $\lambda\in\sigma(T)$ a normal eigenvalue if it is an isolated point of $\sigma (T)$ and if the corresponding Riesz projection has a finite rank. We set $\sigma_{np}(T)$ the set of all normal eigenvalues of $T$. The weakly continue spectrum of $T$ is defined by $\sigma_{wc}(T)=\sigma (T)\setminus \sigma_{np}(T)$ (see \cite{Nik2}, p. 113).

Let us suppose furthermore that $T$ is essentially unitary. Since  $\DD\setminus \sigma (T) \ne \emptyset$, there exists a unitary operator $U$ and a compact operator $K$ such that $T=U+K$ (see \cite{Nik2} p. 115) and we have $\sigma_{wc}(T)=\sigma_{wc}(U)\subset\TT$. Since $T$ is in class $C_0$ the set of eigenvalues of $T$ is $\sigma (T)\cap \DD$ (see \cite{Nik} p. 72). In particular we have $\sigma_{np}(T)\subset \sigma (T)\cap \DD$. It follows that $\sigma_{wc}(T)=\sigma(T)\cap\TT$ and $\sigma_{np}(T)= \sigma (T)\cap \DD$. We deduce also from the above observations that if $T$ is in class $C_0$ then $T$ is essentially unitary if and only if $T^*$ is too.

Let $\iI$ be a closed ideal of $\aA$. We
denote by $S_{\iI}$ the inner factor of $\iI$, that is the greatest inner common divisor  of all nonzero functions in $\iI$ (see \cite{H} p. 85), and we set $Z(\iI)=\bigcap_{f\in \iI}\{\zeta\in \TT\text{ : } f(\zeta)=0\}$. For $E\subset\TT$ we set 
$ \mathcal{J}(E)=\{f\in \aA\text{ : } f_{|E}=0\}$. We shall need the Beurling-Rudin theorem (see \cite{H} p. 85) about the structure  of closed ideals of  $\aA$, which states  that every closed ideal $\iI\subset\aA$ has the form 
$$
\iI=  S_{\iI}\kH \cap \mathcal{J}\big(Z(\iI)\big).
$$

\begin{thm}{\label{thm1}} Let $T$ be an essentially unitary  $C_0$--contraction and let  $f\in \aA$. The following assertions are equivalents. 
\begin{enumerate}
\item $f(T)$ is compact.
\item $f=0 {\text { on }} \sigma (T)\cap \TT$.
\end{enumerate}
\end{thm}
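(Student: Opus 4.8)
The plan is to realise the map $f\mapsto f(T)$ inside the Calkin algebra, where the contraction becomes a \emph{normal} element and the whole statement collapses to the continuous functional calculus. Write $\mathcal{C}=\mathcal{L}(H)/\mathcal{K}(H)$ for the Calkin algebra and $\pi\colon\mathcal{L}(H)\to\mathcal{C}$ for the quotient map, so that $f(T)$ is compact precisely when $\pi\big(f(T)\big)=0$. Since $T$ is essentially unitary and $\DD\setminus\sigma(T)\neq\emptyset$, I use the decomposition $T=U+K$ recalled before the theorem, with $U$ unitary and $K$ compact. Then $\pi(T)=\pi(U)$ is a \emph{unitary}, in particular normal, element of $\mathcal{C}$, and I would first record that its spectrum is $\sigma_{\mathcal{C}}(\pi(T))=\sigma(T)\cap\TT$: indeed $\sigma_{\mathcal{C}}(\pi(T))=\sigma_{\mathcal{C}}(\pi(U))$ is the essential spectrum of the normal operator $U$, which equals $\sigma(U)$ with its isolated eigenvalues of finite multiplicity removed, i.e. $\sigma_{wc}(U)$; and the excerpt already gives $\sigma_{wc}(U)=\sigma_{wc}(T)=\sigma(T)\cap\TT$.

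The one point that genuinely needs an argument is the compatibility of the two functional calculi,
\[
\pi\big(f(T)\big)=f\big(\pi(T)\big),\qquad f\in\aA,
\]
where the right-hand side denotes the continuous functional calculus of the normal element $\pi(T)$. I would prove this by polynomial approximation: choosing polynomials $p_n\to f$ uniformly on $\overline\DD$ (possible since $f\in\aA$), von Neumann's inequality (equivalently the $\kH$--calculus for the $C_0$--contraction) gives $\|p_n(T)-f(T)\|\le\|p_n-f\|_\infty\to0$, hence $\pi(p_n(T))\to\pi(f(T))$; on the other hand $p_n\to f$ uniformly on $\TT\supseteq\sigma_{\mathcal{C}}(\pi(T))$, so $p_n(\pi(T))\to f(\pi(T))$ by the isometry of the continuous functional calculus. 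Since $\pi(p_n(T))=p_n(\pi(T))$ for polynomials, the two limits coincide.

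Granting this, both implications follow simultaneously. Because $\pi(T)$ is normal, the continuous functional calculus $C\big(\sigma_{\mathcal{C}}(\pi(T))\big)\to C^*(\pi(T))$ is isometric, so $f\big(\pi(T)\big)=0$ if and only if $f$ vanishes on $\sigma_{\mathcal{C}}(\pi(T))=\sigma(T)\cap\TT$. Combining with the displayed identity,
\[
f(T)\ \text{compact}\iff \pi\big(f(T)\big)=0\iff f\big(\pi(T)\big)=0\iff f\equiv 0 \ \text{on}\ \sigma(T)\cap\TT,
\]
which is exactly (1)$\iff$(2).

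I expect the converse direction (2)$\Rightarrow$(1) — actually \emph{producing} a compact operator from a vanishing condition — to be the real obstacle, and it is precisely here that the normality of $\pi(T)$ does the work, since no purely formal manipulation can create compactness. The route signalled in the introduction packages the same content through ideal theory: $\mathcal{I}=\{f\in\aA:\ f(T)\ \text{is compact}\}$ is a closed ideal (continuity of $f\mapsto f(T)$ together with the ideal property of $\mathcal{K}(H)$), so by Beurling--Rudin $\mathcal{I}=S_{\mathcal{I}}\kH\cap\mathcal{J}\big(Z(\mathcal{I})\big)$, and one must then identify $Z(\mathcal{I})=\sigma(T)\cap\TT$ and $S_{\mathcal{I}}=1$. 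The inclusion $\sigma(T)\cap\TT\subseteq Z(\mathcal{I})$ is the easy half above, whereas showing that $\mathcal{I}$ is large enough (so that $Z(\mathcal{I})\subseteq\sigma(T)\cap\TT$ and the inner factor is trivial) still demands the compactness-producing step, e.g. via the reduction $f(T)-f(U)\in\mathcal{K}(H)$ and the spectral theorem for $U$. One should also flag the degenerate case in which $\sigma(T)\cap\TT$ has positive Lebesgue measure: then (2) forces $f\equiv0$ and the equivalence is trivial, while the Calkin argument above handles it uniformly with the rest.
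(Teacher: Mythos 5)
Your argument is correct, and it takes a genuinely different route from the paper. You work entirely in the Calkin algebra: from the decomposition $T=U+K$ recalled in the preliminaries, $\pi(T)=\pi(U)$ is a unitary, hence normal, element whose spectrum is $\sigma_{ess}(T)=\sigma_{wc}(T)=\sigma(T)\cap\TT$; you check $\pi\bigl(f(T)\bigr)=f\bigl(\pi(T)\bigr)$ by polynomial approximation (von Neumann's inequality on one side, the isometric continuous functional calculus on the other), and both implications then follow at once from the fact that $g\mapsto g(\pi(T))$ is isometric on $C\bigl(\sigma(\pi(T))\bigr)$. The paper instead treats the two directions separately: for $(1)\Rightarrow(2)$ it evaluates characters of a maximal commutative subalgebra and uses that $T^nf(T)\to 0$ when $f(T)$ is compact (a consequence of $T^nx\to 0$ for $C_0$--contractions); for $(2)\Rightarrow(1)$ it shows that $\iI=\{f\in\aA : f(T)\ \text{compact}\}$ is a closed ideal, identifies it by the Beurling--Rudin theorem, and produces the needed compact operators from a Fatou outer function vanishing on $\sigma(U)$ together with the perturbation lemma asserting that $f(T)-f(U)$ is compact. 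Your route is shorter and symmetric, and it is essentially the same mechanism the authors deploy later for the converse Proposition \ref{proposition1} (normality of $\pi(T)$ plus the $C^*$ functional calculus). What the paper's longer route buys is the intermediate fact that compactness of $f(T)$ forces $T^nf(T)\to 0$, which is exactly what gets recycled for Theorem \ref{muhly} in the $\kH$ setting, where $f$ need not extend continuously to $\TT$ and the continuous functional calculus on $\sigma(\pi(T))$ is no longer directly available. Both proofs ultimately rest on the same nontrivial inputs, namely $T=U+K$ and the identification $\sigma_{wc}(T)=\sigma(T)\cap\TT$ established before the theorem.
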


For the proof of this theorem we need the following lemma.

\begin{lem}\label{com}
 Let $T_1,T_2$ be two contractions on $H$ such that $T_1-T_2$ is compact and $f\in A(\DD)$. Then $f(T_1)$ is compact if and only if  $f(T_2)$ is too.
 \end{lem}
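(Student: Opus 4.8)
The plan is to prove the stronger statement that the difference $f(T_1)-f(T_2)$ is always compact; once this is in hand the equivalence is immediate, since $f(T_1)=f(T_2)+\big(f(T_1)-f(T_2)\big)$ and the compact operators form a norm-closed ideal of $\mathcal{L}(H)$, so one of $f(T_1),f(T_2)$ is compact precisely when the other is.

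First I would settle the case of monomials. Using the telescoping identity
$$
T_1^n-T_2^n=\sum_{k=0}^{n-1}T_1^{\,k}(T_1-T_2)T_2^{\,n-1-k},
$$
each summand contains the compact factor $T_1-T_2$ sandwiched between bounded operators, hence is compact; therefore $T_1^n-T_2^n$ is compact for every $n\ge 0$. By linearity, $p(T_1)-p(T_2)$ is compact for every polynomial $p$.

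Next I would pass from polynomials to a general $f\in A(\DD)$ by approximation. Since the polynomials are dense in $A(\DD)$ for the supremum norm, I choose polynomials $p_j\to f$ uniformly on $\overline{\DD}$. Because $T_1$ and $T_2$ are contractions, von Neumann's inequality gives $\|g(T_i)\|\le\|g\|_\infty$ for every $g\in A(\DD)$; applying it to $g=p_j-f$ yields $p_j(T_i)\to f(T_i)$ in operator norm for $i=1,2$. Consequently $p_j(T_1)-p_j(T_2)\to f(T_1)-f(T_2)$ in norm, each term of the sequence is compact by the previous step, and since the ideal of compact operators is norm-closed, the limit $f(T_1)-f(T_2)$ is compact, which finishes the argument.

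The only substantive points are the telescoping estimate, which reduces compactness of $T_1^n-T_2^n$ to that of $T_1-T_2$, and the use of von Neumann's inequality to ensure that the $A(\DD)$ functional calculus is continuous for the supremum norm, so that the norm limit can be taken inside the closed ideal of compact operators. I expect the main (though still routine) obstacle to be verifying this continuity of the functional calculus; the remaining steps are purely algebraic.
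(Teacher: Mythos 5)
Your proof is correct and follows essentially the same route as the paper: polynomial approximation in the supremum norm, von Neumann's inequality to control the remainders, and norm-closedness of the compact operators. The only difference is that you spell out, via the telescoping identity, why $p(T_1)-p(T_2)$ is compact for polynomials $p$, a step the paper states without proof.
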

 \begin{proof}
 There exists a sequence  $(P_n)_n$ of polynomials such that $\|f-P_n\|_{_\infty}\to 0$, where $\| . \|_\infty$ is the supremum norm on $\TT$. We set $R_n=f-P_n$. Note that for every $n$,   $P_n(T_2)-P_n(T_1)$ is compact. On the other hand, for $i=1,2$, by the  von Neumann  inequality, we have  $\|R_n(T_i)\|\leq \|R_n\|_{_\infty}$. So $\|R_n(T_i)\|\longrightarrow 0$. It follows that 
$$
\begin{aligned}
f(T_2)-f(T_1) & = \lim_{n\to +\infty} \big(P_n(T_2)-P_n(T_1)+R_n(T_2)-R_n(T_1)\big) \\
& =\lim_{n\to +\infty} \big(P_n(T_2)-P_n(T_n)\big).
\end{aligned}
$$
Thus $f(T_2)-f(T_1)$ is compact, which finishes the proof.
 \end{proof}

{\bf Proof of Theorem \ref{thm1}.}
$(1) \Rightarrow (2):$ Let  $\cB_T$ denote the maximal commutative Banach algebra that contains $I_H$ and $T$. We have   $\sigma (T)=\sigma_{\cB_T} (T)$, where 
$\sigma_{\cB_T} (T)$ is the spectrum of  $T$ in $\cB_T$. Let  $\lambda\in\sigma (T)$, there exists  a character $\chi_\lambda$
on  $\cB_T$ such that $\chi_\lambda (T)=\lambda$. Since the set of polynomials is dense in $\aA$, 
 $$\chi_\lambda
(f(T))=f(\chi_\lambda (T))=f(\lambda), \qquad f\in\aA.$$
Let now $f\in \aA$ such that $f(T)$ be compact and let $\lambda\in\sigma (T)\cap\TT$. We have
\begin{equation}\label{formule1}
|f(\lambda)|=|\lambda^nf(\lambda)|=|\chi_\lambda (T^nf(T))|\leq 
\Vert T^nf(T)\Vert.
\end{equation}
Since $T$ is in class $C_0$, $T^nx\to 0$ whenever  $x\in H$, (see \cite{NF} Proposition III.4.1).  Thus for every compact set $C\subset H$, 
$$
\lim_{n\to\infty}\sup_{x\in C}\|T^nx\|=0.
$$
For $C=\overline{f(T)(\BB)}$, where $\BB=\{x\in H\text{ : } \|x\|\leq 1\}$, we get $T^nf(T)\to 0$. Then it follows  from (\ref{formule1}) that $f(\lambda)=0$.

$(2)\Rightarrow (1):$ Without loss of generality, we may assume that  $\sigma(T)\cap\TT$ is of Lebesgue measure zero.  We set
$$\iI=\{f\in \aA\text{ : } f(T)\text{ compact } \};$$
$\iI$ is a closed ideal of  $\aA$. We have to prove that $\iI= \mathcal{J}(\sigma(T)\cap \TT)$. By the Beurling--Rudin theorem, it suffice to show that $Z(\iI)=\sigma(T)\cap \TT$ and $S_{\iI}=1$. In the proof of the implication ($(1)\Rightarrow (2)$) we have seen that
$\iI\subset   \mathcal{J}(\sigma(T)\cap \TT)$, which implies  $\sigma(T)\cap \TT\subset Z(\iI)$. It remains to show that  $S_{\iI}=1$ and $Z(\iI)\subset \sigma(T)\cap \TT$. 

Since $T$ is in class $C_0$ and $I_H-T^*T$ is compact, by  the observation in the beginning of this section  $T=U+K$, where $U$ is unitary and  $K$ is compact. Moreover we have $\sigma_{wc}(U)=\sigma_{wc}(T)=\sigma(T)\cap \TT$ (\cite{Nik2} p. 115), and since 
 $\sigma_{np}(U)$ is countable, we see that $\sigma (U)$ is a subset of $\TT$ of Lebesgue measure zero. By the Fatou theorem (\cite{H} p. 80), there exists a 
 nonzero outer function $f\in \aA$ which vanishes exactly on $\sigma(U)$. We have $f(U)=0$ since $U$ is unitary. By Lemma \ref{com}, $f(T)$ is compact. This shows that  $S_{\iI}=1$ and $Z(\iI)\subset \sigma(U)$. We shall now show that $Z(\iI)\subset \sigma_{wc}(U) $.

Let $\lambda\in \sigma_{np}(U)$; $\lambda$ is an isolated point in  $\sigma(U)$ and $\ker (U-\lambda I_H)$ is of finite dimension. There exists $g\in \aA$ with $g(\lambda)\neq 0$ and $f_{|\sigma(U)\backslash\{\lambda\}}=0$. Since $(z-\lambda)f(z)=0$, $z\in \sigma(U)$ and $U$ unitary, $(U-\lambda)f(U)=0$. So $ f(U)(H)\subset \ker (U-\lambda I_H)$. So $f(U)$ is of finite rank, thus $f(U)$ is compact and by Lemma \ref{com}, $f(T)$ is compact. Hence $\lambda\not\in Z(\iI)$. We deduce that $Z(\iI)\subset \sigma_{wc}(U)=\sigma_{wc}(T)=\sigma(T)\cap\TT$, which finishes the proof.

\vspace{1em}

Let $T\in\mathcal{L}(H)$. The spectral multiplicity of $T$ is the cardinal number  given by the formula
$$
\mu_T=\inf {\text { card } L},
$$
where ${\text { card } L}$ is the cardinal of $L$ and where the infimum is taken over all nonempty sets $L\subset H$ such that $\text{span}\{T^nL; \ n\geq 0\}$ is dense in $H$. Notice that $\mu_T=1$ means that $T$ is cyclic.

\begin{cor}\label{corollaire} Let  $T$ be a contraction on $H$ with  $\mu_T<+\infty$.  Assume that there exists a nonzero function $\varphi \in \aA$ such that $\varphi (T)=0$. Then $f(T)$ is compact for every function $f\in \aA$ that vanishes on $\sigma(T)\cap \TT$.
\end{cor}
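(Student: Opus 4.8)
The plan is to peel off the unitary part of $T$ and apply Theorem \ref{thm1} to what remains. Since $T$ is a contraction, the von Neumann inequality gives a contractive $\aA$--functional calculus, so $f(T)$ is well defined and respects reducing decompositions (being a norm limit of polynomials in $T$). I would therefore start from the canonical decomposition $T=U\oplus T_1$ on $H=H_U\oplus H_1$, with $U$ unitary and $T_1$ completely nonunitary, so that $f(T)=f(U)\oplus f(T_1)$ for every $f\in\aA$ and $\sigma(U),\sigma(T_1)\subset\sigma(T)$. Projecting a cyclic set for $T$ of cardinality $\mu_T$ onto each reducing summand shows $\mu_U\le\mu_T$ and $\mu_{T_1}\le\mu_T$, both finite. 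From $\varphi(T)=0$ with $\varphi\neq0$ one gets $\varphi(T_1)=0$, so $T_1$ is a $C_0$--contraction.

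The unitary part is harmless: since $\sigma(U)\subset\TT$ and $\sigma(U)\subset\sigma(T)$, we have $\sigma(U)\subset\sigma(T)\cap\TT$, where $f$ vanishes; as $U$ is normal the continuous functional calculus gives $f(U)=0$. Hence $f(T)=0\oplus f(T_1)$, and everything reduces to proving that $f(T_1)$ is compact. For this I would invoke Theorem \ref{thm1} applied to $T_1$: it is a $C_0$--contraction, $f\in\aA$ vanishes on $\sigma(T_1)\cap\TT\subset\sigma(T)\cap\TT$, and the only missing hypothesis is that $T_1$ be essentially unitary.

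The heart of the matter---and the step I expect to be the main obstacle---is therefore to show that a $C_0$--contraction of finite multiplicity is essentially unitary, i.e. that $I_{H_1}-T_1^*T_1$ is compact. I would approach this through the Jordan model: $T_1$ is quasi-similar to $\bigoplus_{i=1}^{m}S(\theta_i)$ with $m=\mu_{T_1}<\infty$ and $\theta_1\mid\theta_2\mid\cdots\mid\theta_m=m_{T_1}$, and each scalar summand $S(\theta_i)$ has a rank-one defect, so the model itself has finite-rank defect. The delicate point is that compactness of the defect is \emph{not} a quasi-similarity invariant, so it cannot simply be read off from the model; it must instead be extracted from the finiteness of $m$ together with the location of the spectrum. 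Concretely, the eigenvalues of $T_1$ are the zeros of $m_{T_1}$, which divides $\varphi\in\aA$, so they (with the singular support of $m_{T_1}$) accumulate only on $\TT$; the guiding idea is that, the multiplicity being bounded by $m$, the defect mass cannot accumulate away from $0$, so that the nonzero eigenvalues of $I_{H_1}-T_1^*T_1$ tend to $0$ and the operator is compact. Making this precise via the structure theory of $C_0$--operators is the real work, and this is exactly where the hypotheses ``$\mu_T<\infty$'' and ``$\varphi\in\aA$'' enter.

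Once essential unitarity of $T_1$ is in hand the proof closes at once: Theorem \ref{thm1} gives that $f(T_1)$ is compact, whence $f(T)=f(U)\oplus f(T_1)$ is compact. The bookkeeping with the canonical decomposition and the reduction to $T_1$ are routine; all the difficulty is concentrated in the passage from finite multiplicity to compactness of the defect operator.
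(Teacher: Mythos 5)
Your reduction is exactly the one the paper uses: take the canonical decomposition $T=T_u\oplus T_0$ into a unitary part and a completely nonunitary part, observe that the c.n.u.\ part is a $C_0$--contraction of finite multiplicity, kill the unitary part because $f$ vanishes on $\sigma(T_u)\subset\sigma(T)\cap\TT$, and apply Theorem \ref{thm1} to the c.n.u.\ part. All of that bookkeeping is correct as you present it.

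The problem is that the one step you yourself identify as ``the heart of the matter'' --- that a $C_0$--contraction of finite multiplicity is essentially unitary --- is never actually established. What you offer in its place is a heuristic (``the defect mass cannot accumulate away from $0$'') together with an honest admission that the obvious route through the Jordan model is blocked, since compactness of $I-T_0^*T_0$ is not a quasi-similarity invariant and so cannot be transferred from $\bigoplus_i S(\theta_i)$ to $T_0$. That admission is accurate, but it leaves the proof with a genuine hole at its only nontrivial point. The paper closes this hole by citation: it invokes Proposition 4.3 of Bercovici--Voiculescu \cite{BV}, which yields precisely that $I_{H_0}-T_0^*T_0$ is compact when $\mu_{T_0}<+\infty$. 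So either you must cite that result (or an equivalent one) or you must supply the ``real work'' you defer; as written, the argument does not go through. Everything surrounding that step matches the paper's proof and needs no change.
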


\begin{proof} There exists two orthogonal Hilbert subspaces $H_u$ and $H_0$ that are invariant by $T$, such that $H=H_u\oplus H_0$, $T_u=T_{|H_u}$ is unitary and $T_0=T_{|H_0}$ is completely nonunitary (see \cite{NF}, Theorem 3.2, p. 9 or \cite{Nik}, p. 7).  $T_0$ is clearly in class $C_0$ and we have $\mu_{T_0}<+\infty$. By Proposition 4.3 of \cite{BV}, $I_{H_0}-T_0^*T_0$ is compact. 

Let $f\in \aA$, with $f_{|\sigma(T)\cap \TT}=0$. Since $\sigma (T_0)\subset \sigma (T)$, it follows from Theorem \ref{thm1} that $f(T_0)$ is compact. Now, since $T_u$ is unitary and $\sigma (T_u)\subset \sigma (T)\cap\TT$, we get $f(T_u)=0$. Thus $f(T)$ is compact.
 \end{proof}

\begin{rem} {\rm Let $T$ be a cyclic contraction satisfying condition (1) and with finite spectrum, $\sigma (T)=\{\lambda_1,\ldots,\lambda_n\}$. By Theorem 2 of \cite{At}, there exist a function $f=\sum_{n\geq 0} a_n z^n$, $f\ne 0$, such that $\sum_{n\geq 0} |a_n|< +\infty$ and $f(T)=0$. Then, it follows  from Corollary \ref{corollaire} that $(T-\lambda_1)\ldots (T-\lambda_n)$ is compact. Thus we find the result  Corollary 4.3 of \cite{A}, mentioned in the introduction.}
\end{rem}

Now we finish this section by showing  that the hypothesis ''essentially unitary`` in Theorem \ref{thm1} is necessary. Let us first make some observations. An operator $T\in\mathcal{L}(H)$ is called essentially normal if $TT^*-T^*T$ is compact. Notice that if $T$ is a $C_0$--contraction which is essentially unitary then $T^*$ is  essentially unitary too. Hence  $T$ is essentially normal since $I_H-T^*T$ and $I_H-TT^*$ are both compacts. Notice also that Theorem \ref{thm1} is of interest in the case of contractions $T$ such that  $\sigma(T)\cap \TT$ is of Lebesgue measure zero.

\begin{prop}\label{proposition1} Let $T\in\mathcal{L}(H)$ be a $C_0$--contraction which is essentially normal and such that $\sigma(T)\cap \TT$ is of Lebesgue measure zero. Assume that for every $f\in \aA$ vanishing on $\sigma(T)\cap \TT$, $f(T)$ is compact. Then $T$ is essentially unitary.
 \end{prop}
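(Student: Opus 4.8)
The plan is to pass to the Calkin algebra $\mathcal{C}=\mathcal{L}(H)/\mathcal{K}(H)$, where $\mathcal{K}(H)$ is the ideal of compact operators, and to let $\pi\colon \mathcal{L}(H)\to\mathcal{C}$ be the canonical $*$-homomorphism. Since $T$ is essentially normal, $t:=\pi(T)$ is a \emph{normal} element of the C$^*$-algebra $\mathcal{C}$, and $\|t\|\le\|T\|\le 1$, so its spectrum $\sigma(t)$ (the essential spectrum of $T$) lies in $\overline{\DD}$. The conclusion ``$T$ is essentially unitary'', i.e. $I_H-T^*T\in\mathcal{K}(H)$, is precisely the assertion $t^*t=1$ in $\mathcal{C}$; for the normal element $t$ this is equivalent to $t$ being unitary, hence to $\sigma(t)\subset\TT$. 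So the whole proposition reduces to showing $\sigma(t)\subset\TT$.

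The key step is the functional-calculus identity $\pi(f(T))=f(t)$ for $f\in\aA$, where $f(t)$ is read through the continuous functional calculus of the normal element $t$ on $\sigma(t)\subset\overline{\DD}$. I would prove it by taking polynomials $p_n$ with $\|f-p_n\|_\infty\to 0$ and invoking the von Neumann inequality $\|(f-p_n)(T)\|\le\|f-p_n\|_\infty$, so that $f(T)=\lim_n p_n(T)$ in operator norm; applying the contractive map $\pi$ together with $\pi(p_n(T))=p_n(t)\to f(t)$ gives the identity. Because the continuous functional calculus $C(\sigma(t))\to\mathcal{C}$ is an isometric $*$-isomorphism, one obtains the crucial equivalence
$$
f(T)\ \text{is compact}\iff f(t)=0\iff f_{|\sigma(t)}=0 .
$$
This is the main obstacle, and the point where essential normality is indispensable: without it $t$ need not be normal, the functional calculus loses its isometric character, and the vanishing of $f$ on $\sigma(t)$ would no longer force $f(t)=0$.

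To finish, I would feed in a single well-chosen function. As $\sigma(T)\cap\TT$ is closed and of Lebesgue measure zero, the Fatou theorem (\cite{H} p. 80) provides a nonzero outer function $f\in\aA$ vanishing exactly on $\sigma(T)\cap\TT$; note that only this one instance of the hypothesis is needed. By assumption $f(T)$ is compact, so the equivalence above yields $f_{|\sigma(t)}=0$. Since $f$ is outer it has no zeros in $\DD$, while on $\TT$ its zeros are exactly the points of $\sigma(T)\cap\TT$; hence the zero set of $f$ in $\overline{\DD}$ is $\sigma(T)\cap\TT$, and therefore $\sigma(t)\subset\sigma(T)\cap\TT\subset\TT$. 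Finally, applying the continuous function $z\mapsto|z|^2-1$, which vanishes on $\TT\supset\sigma(t)$, gives $t^*t-1=0$ in $\mathcal{C}$, i.e. $I_H-T^*T\in\mathcal{K}(H)$, which is the desired conclusion.
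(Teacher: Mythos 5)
Your proof is correct, and while it shares the paper's basic frame --- pass to the Calkin algebra, exploit the normality of $t=\pi(T)$, and feed in the single Fatou outer function $f$ vanishing exactly on $\sigma(T)\cap\TT$ --- the way you extract the conclusion is genuinely different. The paper first proves $\sigma_{ess}(T)\subset\sigma(T)\cap\TT$ by a Bezout identity in $\aA$: since $z-\lambda$ and $f$ have no common zero in $\overline\DD$, one writes $(z-\lambda)g_1+fg_2=1$ and concludes that $\pi(T)-\lambda$ is invertible; it then invokes the Rudin--Carleson--Bishop theorem to produce $h\in\aA$ with $h(z)=\overline z$ on $\sigma(T)\cap\TT$, identifies $\pi(T)^*=h(\pi(T))$ by normality, and gets $\pi(I_H)-\pi(T)^*\pi(T)=0$. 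You instead obtain $\sigma(t)\subset\sigma(T)\cap\TT$ in one stroke from the isometry of the continuous functional calculus of the normal element $t$ (compactness of $f(T)$ gives $f(t)=0$, hence $f$ vanishes on $\sigma(t)$, and the zero set of the outer function $f$ in $\overline\DD$ is exactly $\sigma(T)\cap\TT$), and you finish by applying the function $|z|^2-1$ to $t$. This buys a shorter argument that dispenses with both the Bezout step and Rudin--Carleson--Bishop; the only extra care needed is the identity $\pi(f(T))=f(t)$ reconciling the $\aA$-calculus with the continuous calculus, which you correctly establish by polynomial approximation and von Neumann's inequality. One small remark on your commentary: the implication you actually use, $f(t)=0\Rightarrow f_{|\sigma(t)}=0$, already follows from the spectral mapping theorem without normality; where normality is truly indispensable is in the converse direction of your displayed equivalence and, above all, in the final step upgrading $\sigma(t)\subset\TT$ to $t^*t=1$.
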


\begin{proof}  Denote by $\mathcal{K}(H)$ the set of all compact operators on $H$ and by $\pi\ : \mathcal{L}(H) \longrightarrow \mathcal{L}(H)/\mathcal{K}(H)$  the canonical surjection. The essential spectrum $\sigma_{ess}(T)$ of $T$ is defined as the spectrum of $\pi (T)$ in the Banach algebra $\mathcal{L}(H)/\mathcal{K}(H)$.

Let $\lambda\in \sigma(T)\cap \DD$. By Fatou theorem \cite{H}, there exists a non zero  outer function $f\in \aA$ such that $f_{|\sigma(T)\cap \TT}=0$. By hypothesis $f(T)$ is compact. The function $z-\lambda$ and $f$ have no common zero in $\overline\DD$. So there exists two functions $g_1$ and $g_2$ in $\aA$ such that $(z-\lambda)g_1+fg_2=1$. Thus $(T-\lambda)g_1(T)+f(T)g_2(T)=I_H$, which shows that $\pi (T)-\lambda$ is invertible in $\mathcal{L}(H)/\mathcal{K}(H)$. Hence
$\sigma_{ess}(T)\subset \sigma(T)\cap \TT$.

By Rudin-Carleson-Bishop theorem  (see \cite{H} p. 81), there exists a function $h\in \aA$ such that $\overline z=h(z), \ z\in \sigma(T)\cap \TT$. Since $\pi (T)$ is a normal element in the $C^*$ algebra $\mathcal{L}(H)/\mathcal{K}(H)$, we get $\pi (T)^*=h(\pi (T))$. On the other hand  we have $1-h(z)z=0$ on $\sigma(T)\cap \TT$, which implies that $\pi (I_H)-\pi (T)^*\pi (T)=\pi (I_H)-h(\pi (T))\pi (T)=0$. Therefore $I_H-T^*T$ is compact.
 \end{proof}

\section{The case of $f(T)$ for $f\in \kH$}

In this section we are interested in the  compactness of $f(T)$ when $f\in \kH$. As a consequence of Theorem \ref{thm1} we prove the following result which was 
first established by Moore--Nordgren in \cite {MN}, Theorem 1. The proof given in \cite {MN}  uses a result of  Muhly \cite{Mu} (see Remark 1 below),  we give here a simple proof.

\begin{lem}\label{lemme2} Let  $T$ be an essentially unitary  $C_0$--contraction on $H$, $\theta$ be an inner function that divide  $ m_T$ 
(i.e $ m_T/\theta\in \kH$) and such that $\sigma(\theta)\cap \TT$ is of Lebesgue measure zero.   Let   $\psi\in \aA$  be such that 
$\psi_{|\sigma(\theta)\cap \TT} =0$. If $\phi=\psi m_T/ \theta$, then $\phi(T)$ is compact.

In particular the commutant $\{T\}'$ contains a nonzero compact operator.
\end{lem}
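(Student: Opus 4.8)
The plan is to deduce the compactness of $\phi(T)$ from Theorem~\ref{thm1}, applied not to $T$ itself but to the restriction of $T$ to a suitable invariant subspace. Write $b=m_T/\theta$, an inner function, so that $\phi=\psi b$ and $\phi(T)=\psi(T)b(T)$. The crucial observation is that $\theta(T)b(T)=(\theta b)(T)=m_T(T)=0$, whence $\mathrm{ran}\,b(T)\subseteq M$, where $M:=\ker\theta(T)$. Since $\theta(T)$ commutes with $T$, the subspace $M$ is closed and $T$-invariant; set $T_0:=T_{|M}$, a contraction.

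First I would check that $T_0$ is again an essentially unitary $C_0$--contraction. Essential unitarity is immediate: one has $T_0^*=P_M T^*_{|M}$, hence $I_M-T_0^*T_0=P_M(I_H-T^*T)_{|M}$, which is compact because $I_H-T^*T$ is. That $T_0$ is completely nonunitary follows from the standard reducing-subspace argument: if $T_0$ had a nonzero reducing subspace $N$ on which it is unitary, then from $\|Tx\|=\|x\|$ for $x\in N$ we would get $T^*Tx=x$, and since $T_{|N}$ is onto $N$ this yields $T^*N\subseteq N$; thus $N$ would reduce $T$ with $T_{|N}$ unitary, contradicting the complete nonunitarity of $T$. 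Finally $\theta(T_0)=0$ by the very definition of $M$, so $T_0$ is annihilated by the inner function $\theta$ and is therefore in class $C_0$ with $m_{T_0}$ dividing $\theta$. Consequently $\sigma(T_0)=\sigma(m_{T_0})\subseteq\sigma(\theta)$, and since $\psi$ vanishes on $\sigma(\theta)\cap\TT\supseteq\sigma(T_0)\cap\TT$, Theorem~\ref{thm1} applied to $T_0$ shows that $\psi(T_0)$ is compact.

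It then remains to assemble the pieces. Because $M$ is $T$-invariant, the functional calculus restricts, $\psi(T)_{|M}=\psi(T_0)$ (approximate $\psi$ uniformly by polynomials and use the von Neumann inequality). Since $\mathrm{ran}\,b(T)\subseteq M$, we may factor $\phi(T)=\psi(T)b(T)=J\,\psi(T_0)\,b(T)$, where $b(T)$ is regarded as a bounded operator from $H$ into $M$ and $J:M\hookrightarrow H$ is the inclusion. As $\psi(T_0)$ is compact and the two outer factors are bounded, $\phi(T)$ is compact. For the last assertion, apply the statement with $\theta=m_T$ (so $b=1$ and $\phi=\psi$): by the Fatou theorem there is a nonzero outer function $\psi\in\aA$ vanishing on the measure-zero set $\sigma(T)\cap\TT$, and $\psi(T)$ is then a compact operator in $\{T\}'$; it is nonzero because an outer function is not divisible by the nonconstant inner function $m_T$, so $\psi(T)\neq0$ by minimality of $m_T$.

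The step I expect to be the main obstacle is the verification that $T_0=T_{|M}$ is genuinely a completely nonunitary $C_0$--contraction that is essentially unitary, since this is precisely what is needed to invoke Theorem~\ref{thm1}; the reducing-subspace argument for complete nonunitarity and the identity $I_M-T_0^*T_0=P_M(I_H-T^*T)_{|M}$ are the points requiring care.
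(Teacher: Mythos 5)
Your argument for the compactness of $\phi(T)$ is correct and is essentially the paper's own proof. The authors restrict $T$ to $H_1=\overline{\Theta(T)H}$ with $\Theta=m_T/\theta$, note that $T_1=T_{|H_1}$ is a $C_0$--contraction with $m_{T_1}=\theta$ and that $I_{H_1}-T_1^*T_1=P_{H_1}(I_H-T^*T)_{|H_1}$ is compact, apply Theorem \ref{thm1} to get $\psi(T_1)$ compact, and conclude from $\phi(T)=\psi(T_1)\Theta(T)$. You work with $\ker\theta(T)$ instead of $\overline{\Theta(T)H}$ (for a $C_0$--contraction these subspaces in fact coincide, but you only need the trivial inclusion $\Theta(T)H\subset\ker\theta(T)$ together with the divisibility $m_{T_0}\mid\theta$, so your variant is marginally more self-contained than the paper's appeal to $m_{T_1}=\theta$); everything else --- the identity $I_M-T_0^*T_0=P_M(I_H-T^*T)_{|M}$, the complete nonunitarity of the restriction, the factorization of $\phi(T)$ through the invariant subspace --- matches the paper.

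The one place your write-up goes wrong is the ``in particular'' clause. You take $\theta=m_T$ and invoke Fatou's theorem for the set $\sigma(T)\cap\TT$, but the lemma does not assume that $\sigma(m_T)\cap\TT$ has Lebesgue measure zero; only $\sigma(\theta)\cap\TT$ is assumed small for the given divisor $\theta$. If, for instance, $m_T$ is a Blaschke product whose zeros accumulate on all of $\TT$, then $\sigma(T)\cap\TT=\TT$ and the only function in $\aA$ vanishing there is $0$, so your choice produces only the zero operator. The correct route is to keep a \emph{nonconstant} divisor $\theta$ with $\sigma(\theta)\cap\TT$ of measure zero (such a divisor always exists: a single Blaschke factor, an atomic piece of the singular part, or the $\theta_n$ of Lemma \ref{lemme4}), choose by Fatou a nonzero outer $\psi\in\aA$ vanishing on $\sigma(\theta)\cap\TT$, and observe that $\phi=\psi\, m_T/\theta$ is not divisible by $m_T$ (its inner factor is $m_T/\theta$, since $\psi$ is outer, and $\theta$ is nonconstant), whence $\phi(T)\neq0$.
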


\begin{proof}
Let $\Theta= m_T/ \theta$ and $T_1=T|_{\overline{\Theta(T) H}}$ be the restriction of  $T$ to $\overline{\Theta(T) H}$; $T_1$ is a  $C_0$--contraction with  $m_{T_1}=\theta$. Moreover    $I_{H_1}-T^{\ast}_1T_1=P_{H_1}(I_H-T^\ast T)_{|H_1}$ is compact, where  $P_{H_1}$ is the orthogonal projection from $H$ onto $H_1$. By Theorem  \ref{thm1},  $\psi(T_1)$ is compact and thus
$\phi(T)=\psi(T)\Theta(T)=\psi(T_1)\Theta(T)$ is also compact.
\end{proof}

\begin{lem}\label{lemme3} Let  $T$ be an essentially unitary  $C_0$--contraction on $H$, $\theta$ be an inner function that divide  $ m_T$  and such 
that $\sigma(\theta)\cap \TT$ is of Lebesgue measure zero.   Let   $f\in \kH$  be such that $\lim_{n\to +\infty} T^nf(T) =0$. 
If $\phi=fm_T/ \theta$, then $\phi(T)$ is compact.
\end{lem}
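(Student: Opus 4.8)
The plan is to reuse the restriction device from the proof of Lemma~\ref{lemme2}, but to replace the appeal to Theorem~\ref{thm1} by a direct computation in the Calkin algebra that exploits the hypothesis $\lim_{n\to\infty}T^nf(T)=0$. Concretely, I would set $\Theta=m_T/\theta$ (an inner function, since $\theta$ divides $m_T$), put $H_1=\overline{\Theta(T)H}$ and $T_1=T|_{H_1}$. Exactly as in Lemma~\ref{lemme2}, the subspace $H_1$ is invariant under $T$, the restriction $T_1$ is an essentially unitary $C_0$--contraction with $m_{T_1}=\theta$, and $I_{H_1}-T_1^*T_1=P_{H_1}(I_H-T^*T)|_{H_1}$ is compact. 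Moreover $\phi(T)=f(T)\Theta(T)=f(T_1)\Theta(T)$, because $\Theta(T)H\subset H_1$ and $f(T)|_{H_1}=f(T_1)$. Since $\Theta(T)$ is bounded, it then suffices to prove that $f(T_1)$ is compact.

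Next I would transfer the hypothesis to $T_1$ and run the Calkin-algebra argument. As $H_1$ is $T$--invariant, $T_1^nf(T_1)=\big(T^nf(T)\big)|_{H_1}$, so $\|T_1^nf(T_1)\|\le\|T^nf(T)\|\to 0$. Let $\pi\colon\mathcal{L}(H_1)\to\mathcal{L}(H_1)/\mathcal{K}(H_1)$ be the canonical surjection. Because $T_1$ is a $C_0$--contraction which is essentially unitary, the observation of Section~2 shows that $T_1^*$ is essentially unitary too; hence both $I_{H_1}-T_1^*T_1$ and $I_{H_1}-T_1T_1^*$ are compact, which means precisely that $\pi(T_1)$ is a unitary element of the Calkin algebra. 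Writing $b=\pi(f(T_1))$, multiplicativity of $\pi$ gives $\pi(T_1)^n b=\pi\big(T_1^nf(T_1)\big)\to 0$, while unitarity of $\pi(T_1)^n$ forces $\|\pi(T_1)^n b\|=\|b\|$ for every $n$. Therefore $\|b\|=0$, i.e. $f(T_1)$ is compact, and consequently $\phi(T)=f(T_1)\Theta(T)$ is compact as the product of a compact and a bounded operator.

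The only genuinely delicate point is the middle step, namely that $\pi(T_1)$ is unitary in the Calkin algebra; everything else is formal. It is exactly this that yields the isometry identity $\|\pi(T_1)^n b\|=\|b\|$ (since $u^*u=uu^*=1$ gives $\|ub\|^2=\|b^*u^*ub\|=\|b\|^2$) and so forces $\pi(f(T_1))=0$. This is where the essential unitarity of $T_1$, inherited from $T$, together with its $C_0$ character is crucial. The divisibility $\theta\mid m_T$ serves to make $\Theta$ inner and the reduction to $T_1$ legitimate, and the measure-zero hypothesis on $\sigma(\theta)\cap\TT$ is carried along so that the statement dovetails with Lemma~\ref{lemme2} when the two are combined in the proof of Theorem~\ref{muhly}.
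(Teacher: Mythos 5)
Your argument is correct, but it takes a genuinely different route from the paper's. The paper proves this lemma by function-theoretic means: it uses the Rudin--Carleson--Bishop theorem to produce $h_n\in\aA$ with $\overline{z}^{\,n}=h_n(z)$ on $\sigma(\theta)\cap\TT$ and $\|h_n\|_\infty=1$, applies Lemma~\ref{lemme2} to $\psi=1-z^nh_n$ to conclude that $\bigl(I_H-T^nh_n(T)\bigr)\bigl(m_T/\theta\bigr)(T)$ is compact, and then lets $n\to\infty$ using $\|T^nf(T)h_n(T)(m_T/\theta)(T)\|\le\|T^nf(T)\|\to0$; this is where the measure-zero hypothesis on $\sigma(\theta)\cap\TT$ and the chain back through Lemma~\ref{lemme2} to Theorem~\ref{thm1} are genuinely used. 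Your Calkin-algebra argument replaces all of this by the single observation that $\pi(T_1)$ is an isometry of the Calkin algebra; in fact only the half $\pi(T_1)^*\pi(T_1)=1$, i.e.\ compactness of $I_{H_1}-T_1^*T_1$, is needed for $\|\pi(T_1)^n b\|^2=\|b^*\pi(T_1)^{*n}\pi(T_1)^n b\|=\|b\|^2$, so you do not even have to invoke the Section~2 observation that $T_1^*$ is essentially unitary. From this, $\|\pi(f(T_1))\|=\lim_n\|\pi(T_1^nf(T_1))\|\le\lim_n\|T_1^nf(T_1)\|=0$, and the lemma follows. This is sound, and it actually proves more than the statement: applied to $T$ itself (no restriction to $H_1$ needed) it shows that $T^nf(T)\to0$ forces $f(T)$ compact for any essentially unitary $C_0$--contraction, which is precisely the implication $(1)\Rightarrow(2)$ of Theorem~\ref{muhly}; the hypotheses on $\theta$ and the passage through $\Theta=m_T/\theta$ then become superfluous, as does the corona-theorem argument and Lemma~\ref{lemme4}. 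What the paper's approach buys is that it stays entirely within the disk-algebra/Beurling--Rudin framework of Section~2 and exhibits explicit interpolating functions; what yours buys is brevity and a conceptual explanation --- essential unitarity makes $\pi(T)$ an isometry, so the essential norm of $T^nf(T)$ is independent of $n$ and cannot decay unless it vanishes.
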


\begin{proof}
By the Rudin-Carleson-Bishop theorem, for every nonnegative integers $n$, there exists $h_n\in \aA$ such that $\overline{z}^n=h_n (z), z\in \sigma(\theta)\cap \TT$ and $\|h_n\|_\infty=1$, where $\|.\|_\infty$ is the supremum norm on $\TT$ (see \cite{H} p. 81). We have, for every $n$, $1-z^nh_n(z)=0, \ z\in \sigma(\theta)\cap \TT$, then by Lemma \ref{lemme2}, 
$(I_H-T^nh_n(T))\big(m_T/ \theta\big)(T)$ is compact. It follows that $\phi(T)-T^nf(T)h_n(T)\big(m_T/ \theta\big)(T)$ is also compact. Since 
$$
\|T^nf(T)h_n(T)\big(m_T/ \theta\big)(T)\|\leq \|T^nf(T)\|\longrightarrow 0,
$$
we deduce that $\phi(T)$ is compact.
\end{proof}

We need the  following lemma about inner functions, which is in fact contained in the proof of the main result of \cite{Nor}. For the completeness we include here its  proof.

\begin{lem}\label{lemme4} Let $\Theta$ an inner function. There exists a sequence $(\theta_n)_n$ of inner functions such that for each $n$, $\theta_n$ divides $\Theta$, $\sigma(\theta_n)\cap \TT$ is of Lebesgue measure zero and for every $z\in \DD$, $\lim_{n\to +\infty} \theta_n(z)=\Theta (z)$.
\end{lem}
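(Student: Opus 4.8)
The plan is to invoke the canonical factorization of $\Theta$ and to approximate its Blaschke part and its singular part separately by inner divisors whose spectrum meets $\TT$ only in a set of Lebesgue measure zero. Write $\Theta=c\,B\,S$, where $|c|=1$, $B$ is the Blaschke product built from the zeros $(a_k)_k$ of $\Theta$ in $\DD$, and $S(z)=\exp\big(-\int_\TT \frac{\zeta+z}{\zeta-z}\,d\mu(\zeta)\big)$ is the singular inner function attached to a positive singular measure $\mu$ on $\TT$. By the definition recalled in the introduction, $\sigma(\theta_n)\cap\TT$ will be the union of the limit points on $\TT$ of the zeros of $\theta_n$ and of $\supp\mu_n$, and it is only the support of the singular measure that may carry positive measure; the whole difficulty is concentrated there.

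For the Blaschke part I would take the partial products $B_n=\prod_{k=1}^n b_{a_k}$, with $b_a(z)=\frac{|a|}{a}\frac{a-z}{1-\overline a z}$. Each $B_n$ is a finite Blaschke product, so its zeros are finitely many points of $\DD$ and $\sigma(B_n)\cap\TT=\emptyset$; moreover $B_n$ divides $B$, and $B_n\to B$ uniformly on compact subsets of $\DD$, hence pointwise on $\DD$.

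The essential step is the singular part. Since $\mu$ is singular, it is concentrated on a Borel set $E\subset\TT$ of Lebesgue measure zero, i.e. $\mu(\TT\setminus E)=0$. By inner regularity of $\mu$ I would choose compact sets $K_n\subset E$ with $\mu(E\setminus K_n)\to 0$; being subsets of $E$, each $K_n$ has Lebesgue measure zero. Put $\mu_n=\mu|_{K_n}$ and let $S_n$ be the corresponding singular inner function. Then $\mu_n\leq\mu$, so $S_n$ divides $S$, while $\supp\mu_n\subset K_n$ has measure zero. For pointwise convergence, fix $z\in\DD$: the function $\zeta\mapsto\frac{\zeta+z}{\zeta-z}$ is continuous, hence bounded, on $\TT$, and $\big|\int g\,d\mu-\int g\,d\mu_n\big|\leq\|g\|_\infty\,\mu(E\setminus K_n)\to 0$ for every bounded Borel $g$; taking $g(\zeta)=\frac{\zeta+z}{\zeta-z}$ gives $S_n(z)\to S(z)$ for every $z\in\DD$.

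Setting $\theta_n=c\,B_n S_n$ then produces inner functions dividing $\Theta$, with $\sigma(\theta_n)\cap\TT=\supp\mu_n$ of Lebesgue measure zero and $\theta_n(z)=c\,B_n(z)S_n(z)\to c\,B(z)S(z)=\Theta(z)$ for every $z\in\DD$. The only genuine obstacle is the one flagged above: the topological support $\supp\mu$ can be far larger than the measure-zero carrier of $\mu$ — indeed $\supp\mu$ may be all of $\TT$ when $\mu$ is a dense countable sum of point masses — so one cannot simply discard it. The point is that inner regularity lets us exhaust the true, measure-zero carrier $E$ of $\mu$ by compact sets, all the while keeping $\mu_n\leq\mu$ so as to preserve divisibility.
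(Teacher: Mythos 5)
Your proof is correct and follows essentially the same route as the paper's: truncate the Blaschke product to a finite divisor (the paper takes the zeros in $\{|z|\le 1-1/n\}$, you take the first $n$ zeros — immaterial), and exhaust the measure-zero carrier of the singular measure by compact subsets $K_n$, restricting $\mu$ to $K_n$ to get singular inner divisors with $\supp\mu_n\subset K_n$ of measure zero. You even supply the pointwise convergence estimate for $S_n(z)\to S(z)$ that the paper leaves implicit.
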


\begin{proof}
Let $B_n$ be the Blaschke product constructed with the zeros of $\Theta$ contained in the disk $\{|z|\leq  1-1/n\}$, each zero of $\Theta$ repeated according to its  multiplicity.

Let $\nu$ be the singular measure defining the singular part of $\Theta$. There exists $F\subset \TT$ of Lebesgue measure Zero  such that $\nu (F)=\nu (\TT)$. There exists a sequence $(K_n)_n$ of compact subsets of $F$ such that $\lim_{n\to\infty}\nu(K_n)=\nu(F)$. For every $n$, let $\nu_n$ be the measure on $\TT$ defined by  $\nu_n(E)=\nu(E\cap K_n)$. Denote by $S_n$  the singular inner function associated to the measure $\nu_n$. It suffice now to take $\theta_n=B_nS_n$.
\end{proof}

We are now able to prove the main result of this section.

\begin{thm}\label{muhly} Let  $T$ be an essentially unitary  $C_0$--contraction on $H$. Let
$f\in \kH$. Then the following assertions are equivalents.
\begin{enumerate}
\item $T^nf(T)\to 0$.
\item $f(T)$ is  compact.
\end{enumerate}
\end{thm}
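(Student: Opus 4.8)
The plan is to prove the two implications separately, with the forward direction being the elementary one and the reverse direction carrying the real content.

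For the implication $(2)\Rightarrow(1)$, I would argue directly that compactness of $f(T)$ forces $T^nf(T)\to 0$. Since $T$ is a $C_0$-contraction, Proposition III.4.1 of \cite{NF} gives $T^nx\to 0$ for every $x\in H$, and hence $\sup_{x\in C}\|T^nx\|\to 0$ uniformly on any compact set $C\subset H$. Taking $C=\overline{f(T)(\BB)}$, which is compact precisely because $f(T)$ is a compact operator, yields $\|T^nf(T)\|=\sup_{\|x\|\le 1}\|T^nf(T)x\|\to 0$. This is exactly the same device already used in the proof of Theorem \ref{thm1}, so I would reuse it verbatim.

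The substantive direction is $(1)\Rightarrow(2)$, and here I would combine Lemma \ref{lemme3} with the approximation scheme of Lemma \ref{lemme4}. Write $\Theta=m_T$, and let $(\theta_n)_n$ be the sequence of inner divisors of $m_T$ supplied by Lemma \ref{lemme4}, so that each $\theta_n$ divides $m_T$, each $\sigma(\theta_n)\cap\TT$ has Lebesgue measure zero, and $\theta_n(z)\to m_T(z)=\Theta(z)$ pointwise on $\DD$. Applying Lemma \ref{lemme3} with $\theta=\theta_n$ shows that $\phi_n(T)$ is compact, where $\phi_n=f\,m_T/\theta_n$. The goal is then to pass to the limit: I want to show that $\phi_n(T)\to f(T)$ in operator norm, since a norm limit of compact operators is compact. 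The functions $\phi_n=f\cdot(m_T/\theta_n)$ are uniformly bounded in $\kH$ (each $m_T/\theta_n$ is inner, hence of sup-norm one, so $\|\phi_n\|_\infty\le\|f\|_\infty$) and converge pointwise on $\DD$ to $f$ because $\theta_n\to m_T$ forces $m_T/\theta_n\to 1$ pointwise.

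The main obstacle is precisely this norm-convergence step, because pointwise convergence of symbols in $\kH$ does not in general give norm convergence of the corresponding operators on a $C_0$-contraction. This is where the hypothesis $T^nf(T)\to 0$ must be exploited and where the corona theorem (advertised in the introduction) should enter. I expect the argument to run roughly as follows: the difference $f(T)-\phi_n(T)=\bigl(f(1-m_T/\theta_n)\bigr)(T)$ can be controlled by splitting off a high power of $T$. Using the Rudin-Carleson-Bishop interpolation as in Lemma \ref{lemme3} to manufacture $h_N\in\aA$ with $\overline z^N=h_N$ on $\sigma(T)\cap\TT$ and $\|h_N\|_\infty=1$, one writes $I_H-T^Nh_N(T)$ times the relevant function as a compact operator while the remaining piece is estimated by $\|T^Nf(T)\|$, which tends to $0$ by hypothesis. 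Choosing $N$ large to make the tail small and then $n$ large to absorb the middle term should give $\|f(T)-\phi_n(T)\|\to 0$. Balancing these two limits correctly, and verifying that the corona-type factorization legitimately bounds the operator norm rather than merely the symbol, is the delicate point I would need to carry out with care.
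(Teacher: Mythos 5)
Your easy direction $(2)\Rightarrow(1)$ is exactly the paper's argument, and you have correctly identified the building blocks for $(1)\Rightarrow(2)$: Lemma \ref{lemme4} to produce inner divisors $\theta_n$ of $m_T$ with $\sigma(\theta_n)\cap\TT$ of measure zero, and Lemma \ref{lemme3} to make each $\phi_n(T)=(f\,m_T/\theta_n)(T)$ compact. But the gluing step --- which you yourself flag as the delicate point --- is where the proposal breaks down. Your plan is to prove $\|f(T)-\phi_n(T)\|\to 0$ by an iterated limit: first choose $N$ large so that the piece carrying $T^Nf(T)$ is small, ``then $n$ large to absorb the middle term.'' There is no mechanism for that second limit. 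Writing $\varphi_n=m_T/\theta_n$, the leftover term after splitting off $T^Nh_N(T)$ is essentially $f(T)\bigl(I_H-T^Nh_N(T)\bigr)\bigl(I_H-\varphi_n(T)\bigr)$, and this does not tend to $0$ as $n\to\infty$: the functions $\varphi_n$ converge to $1$ only pointwise on $\DD$, while $\|1-\varphi_n\|_\infty$ stays of order $1$ (each $\varphi_n$ is inner), and $\varphi_n(T)\to I_H$ in operator norm is precisely the kind of statement that fails for $C_0$-contractions. A secondary problem: you invoke Rudin--Carleson--Bishop interpolation of $\overline z^N$ on $\sigma(T)\cap\TT$, but that set may have positive Lebesgue measure here (this is exactly why Lemma \ref{lemme4} exists); the interpolation is only legitimate on $\sigma(\theta_n)\cap\TT$.

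The paper's actual mechanism, which you gesture at but do not use, is a corona Bezout identity that couples the power of $T$ to the index of the inner divisor. One chooses $n_k$ so that $|\varphi_{n_k}(z)|\ge e^{-1}$ on $|z|\le k/(k+1)$, observes $|z^k|\ge e^{-1}$ on the complementary annulus, and hence $e^{-1}\le |z^k|+|\varphi_{n_k}(z)|\le 2$ on all of $\DD$. The corona theorem then yields $h_1,h_2\in\kH$ with $z^kh_1+\varphi_{n_k}h_2=1$ and $\|h_i\|_\infty\le C$ \emph{uniformly in $k$}. This gives the exact identity $f(T)=T^kf(T)h_1(T)+(fh_2\varphi_{n_k})(T)$, in which the first term has norm at most $C\|T^kf(T)\|\to 0$ and the second is compact by Lemma \ref{lemme3} applied to $fh_2$ (note $\|T^n(fh_2)(T)\|\le C\|T^nf(T)\|\to 0$). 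So $f(T)$ is exhibited directly as a norm limit of compact operators, without ever proving $\phi_n(T)\to f(T)$. You would need to replace your double-limit sketch with this (or an equivalent) argument for the proof to close.
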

\begin{proof}
 $(1)\Rightarrow(2):$ Let $\Theta=m_T$ and let $(\theta_n)_n$ be the sequence of inner functions given by Lemma \ref{lemme4}. 
For every $n$, we set $\varphi_n=m_T/\theta_n$. Since $(\varphi_n)_n$ is a bounded sequence in $\kH$ and $\varphi_n (z)  \longrightarrow 1\ (z\in\DD)$, $(\varphi_n)_n$ converges to 1 uniformly on the compacts of $\DD$. Then for every $k$, there exists a nonnegative integer $n_k$ such that $|\varphi_{n_k}(z)|\geq e^{-1}$ for $|z|\leq {\frac{k}{k+1}}$. Clearly the sequence $(n_k)_k$ may be chosen to be strictly increasing. Moreover for $|z|\geq  {\frac{k}{k+1}}$, we have $|z^k|\geq e^{-1}$. So
$$
e^{-1}\leq |z^k|+|\varphi_{n_k}(z)|\leq 2,\ z\in\DD.
$$
By he corona theorem (\cite{Nik}, p. 66), there exists two functions $h_1$ and $h_2$ in $\kH$ such that
$$
z^kh_1+\varphi_{n_k}h_2=1\ \text{ and } \ |h_1|, |h_2|\leq C,
$$
where $C$ is an absolute constant. Thus we get
$$
T^kf(T)h_1(T)+ f(T)\varphi_{n_k}(T)h_2(T)=f(T),
$$
and 
$$
\|T^kf(T)h_1(T)\|\leq C\|T^kf(T)\|\longrightarrow 0.
$$
Consequently, $f(T)=\lim_{k\to\infty}f(T)\varphi_{n_k}(T)h_2(T)$ in the $\mathcal{L}(H)$ norm. Finally $f(T)$ is compact since by Lemma \ref{lemme3}, for every $k$, $f(T)\varphi_{n_k}(T)h_2(T)$ is compact.

$(2)\Rightarrow(1):$ see the proof of Theorem \ref{thm1}.
\end{proof}

 Let $T$ be a contraction on $H$. It is shown by Esterle, Strouse and Zouakia in \cite{ESZ}, that if  $f\in \aA$, then $\lim_{n\to\infty}T^nf(T)=0$ if and only if $f$ vanishes on $\sigma(T)\cap \TT$. So Theorem \ref{muhly} implies Theorem \ref{thm1}. Now, if $T$ is completely non unitary, Bercovici showed in \cite{B} that if $f\in \kH$ and $\lim_{r\to 1-} f(rz)=0$, for every $z\in \sigma (T)\cap \TT$, then $\lim_{n\to \infty} T^n f(T)=0$. So it follows immediately from this fact and Theorem \ref{muhly} the following result.

\begin{cor} Let  $T$ be an essentially unitary  $C_0$--contraction on $H$. Let
$f\in \kH$. If for every $z\in \sigma (T)\cap \TT$, $\lim_{r\to 1-} f(rz)=0$, then $f(T)$ is compact.
\end{cor}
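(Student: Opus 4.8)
The plan is to bypass any direct compactness estimate and instead route the argument through the asymptotic condition $T^nf(T)\to 0$, for which Theorem \ref{muhly} already supplies the equivalence with compactness. The conceptual work is entirely contained in translating the radial--limit hypothesis on $f$ into this asymptotic statement, and that translation is exactly the content of Bercovici's theorem in \cite{B}. So the corollary should emerge as a two-step concatenation, with no new calculation required.

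First I would observe that, by the very definition of class $C_0$ adopted in the introduction, the contraction $T$ is completely nonunitary. This is precisely the hypothesis under which Bercovici's result applies. Since $f\in\kH$ and $\lim_{r\to 1^-}f(rz)=0$ for every $z\in\sigma(T)\cap\TT$, Bercovici's theorem then yields $\lim_{n\to\infty}T^nf(T)=0$.

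Second, I would invoke the hypothesis that $T$ is \emph{essentially unitary}. This is what brings Theorem \ref{muhly} into play: for an essentially unitary $C_0$--contraction, the condition $T^nf(T)\to 0$ is equivalent to the compactness of $f(T)$. Having established the former in the previous step, I conclude that $f(T)$ is compact, which is the assertion of the corollary.

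The only point that warrants attention is that the two cited inputs have distinct standing hypotheses, and one must verify that both are met at once. Here this is automatic: the phrase ``essentially unitary $C_0$--contraction'' simultaneously encodes complete nonunitarity (the requirement in Bercovici's theorem) and essential unitarity (the requirement in Theorem \ref{muhly}). Consequently there is no genuine obstacle, and the proof is a direct composition of Bercovici's theorem with Theorem \ref{muhly}.
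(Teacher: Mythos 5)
Your argument is correct and is exactly the paper's own proof: the authors also note that a $C_0$--contraction is completely nonunitary, apply Bercovici's theorem to deduce $\lim_{n\to\infty}T^nf(T)=0$, and then conclude by Theorem \ref{muhly}. No differences to report.
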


\section{Remarks}

1. As in Corollary \ref{corollaire}, Theorem \ref{muhly} holds for  a $C_0$-contraction such that $\mu_T<+\infty$.

\vspace{0.5em}

2.  Let  $\mathcal{H}$ be a separable Hilbert space and  $\lL(\mathcal{H})$ the space of weakly measurable functions, $\mathcal{H}$--valued, norm  square 
integrable  functions on $\TT$. We denote by  $\hH(\mathcal{H})$ the space of functions in $\lL(\mathcal{H})$  whose negatively indexed Fourier 
 coefficients vanish.  The space  of  bounded weakly measurable function on $\TT$ taking values in $ \mathcal{L}(\mathcal{H})$ is $\kL (\mathcal{L}(\mathcal{H}))$, 
 and the subspace of $\kL (\mathcal{L}(\mathcal{H}))$ consisting of those functions whose negatively indexed Fourier coefficients vanish is  
 $ \kH (\mathcal{L}(\mathcal{H}))$. A function $\Theta \in \kH (\mathcal{L}(\mathcal{H}))$  is called inner if it is unitary  valued almost everywhere.  
 The shift  $S$ on  $\hH(\mathcal{H})$ is given by  $S F(z)= zF(z)$.  If $\Theta\in \kH (\mathcal{L}(\mathcal{H}))$ is an inner function , 
 $\Theta \hH(\mathcal{H})$ is an invariant subspace for $S$.  Let  $\mathcal{K}(\Theta)$ be the orthogonal complement of  
 $\Theta \hH(\mathcal{H})$ and  $P_{\mathcal{K}(\Theta)}$ be the orthogonal projection on  ${\mathcal{K}(\Theta)}$. The compression  
 $S(\Theta)$ of the shift $S$ to a subspace  $\mathcal{K}(\Theta)$ is given by 
$$S(\Theta)F=P_{\mathcal{K}(\Theta)}(S F),$$

If  $T$ is a $C_0$--contraction on a Hilbert  $H$,  then there exists a Hilbert space $\mathcal{H}$ and  inner functions  
$\Theta\in \kH(\mathcal{L}(\mathcal{H}))$ such that $T$ is unitarily equivalent to  $S(\Theta)$. 

Muhly showed in \cite{Mu} that for  $f\in \kH$, the operator 
  $f(T)$ is compact if and only if 
$$f\Theta^\ast\in \kH(\mathcal{L}(\mathcal{H})) + C(S_\infty(\mathcal{H})),$$ 
where $C(S_\infty(\mathcal{H}))$ is the space of continuous functions on $\TT$ that takes values in $S_\infty(\mathcal{H})$, the space of compact operators 
on $\mathcal{H}$.

Suppose now that the contraction $T$ satisfies the hypothesis of Theorem \ref{muhly}. Since 
$\kH+C(\TT)= \text{Clos }_{_{\kL(\TT)}}\{\overline{z}^n\kH\text{ : } n\geq 0\}$ (\cite{Nik} p. 183), we get for every $f\in\kH$, 
\begin{multline*}
\|T^nf(T)\|=\inf_{h\in\kH} \|T^nf(T)+m_T(T)h(T)\|
 \leq \inf_{h\in\kH} \|z^nf(z)+m_T(z)h(z)\|_\infty\\
=\inf_{h\in\kH}\|f(z)\overline{m_T}(z)+\overline{z}^{n}h(z)\|_\infty=
\dist(f\overline{m_T},\overline{z}^{n}\kH).
\end{multline*}
So if  $f\overline{m_T}\in \kH+C(\TT)$, then $T^nf(T)\to 0$ and by Theorem \ref{muhly}, $f(T)$ is compact. We do not know if the converse is true that is $f(T)$ compact implies that $f\overline{m_T}\in \kH+C(\TT)$.

\vspace{0.5em}

3. The following remark gives an example of a $C_0$--contraction $T$ such that zero is the unique compact operator contained in $\kH (T)$. Let $S$ be any  $C_0$-- contraction on $H$. We consider the Hilbert space 
$$\ell^2(H)=\big\{x=(x_n)_{n\geq 1}\subset H\text{ : }
\| x\|_{2}=\big(\sum_{n\geq 1} \Vert x_n\Vert^2\big)^{1\over2}<+\infty\big\}.$$
and the operator $T$ defined on $\ell^2(H)$ by the formula : $T x= (Sx_n)_n$, $x=(x_n)_n$. Clearly $T$ is a $C_0$-- contraction with $m_T=m_S$. We claim that for every $f\in \kH $ such that $f(T)\ne 0$, $f(T)$ is not  compact. Indeed,  let $v\in H$ with norm 1 and such that  $f(S)v\ne 0$ and for 
$n\geq 1$, let $x(n)\in \ell^2({H})$ defined by  $x(n)_k=v$ if  $k=n$ and $x(n)_k=0$ if $k\ne n$. Since for every $k$ the projection $x\to x_k$ is continue, zero is the unique limit of any convergent subsequence of $\big(f(T)x(n)\big)_n$. On the other hand, for every $n$, $\Vert f(T)x(n)\Vert_2=\Vert f(S)v\Vert$. So all subsequences of $\big(f(T)x(n)\big)_n$ diverge.

\end{document}